\documentclass[12pt,reqno]{amsart}

\usepackage{amsmath, amssymb, latexsym, amsfonts, amsthm, url, color, float}
\usepackage[vmargin=2.2cm,hmargin=2.6cm]{geometry} 
\usepackage{graphicx}
 
\usepackage{ifthen}
\newboolean{alefonts}
\setboolean{alefonts}{true}
%\setboolean{alefonts}{false}

\ifthenelse{\boolean{alefonts}}{
% NEW TIMES FONTS 

\usepackage[full]{textcomp} 
\usepackage{newtxtext} % osf for text, not math
\usepackage{cabin} % sans serif 
\usepackage{zlmtt}% latin modern typewriter
\usepackage[bigdelims,vvarbb]{newtxmath} % bb from STIX
\usepackage[cal=boondoxo]{mathalfa} % mathcal

}{}

% Theorems defs
\newtheoremstyle{sltheorems}% name
{10pt}%  Space above
{6pt}%  Space below
{\slshape}% Body font
{}% Indent amount (empty = no indent, \parindent = para indent)
{\bfseries}% Thm head font
{.}%Punctuation after thm head
{.5em}% Space after thm head: " " = normal interword space;
 %   \newline = linebreak
{\thmname{#1}\thmnumber{ #2}\thmnote{ (#3)}}
% Thm head spec (can be left empty, meaning `normal')

\theoremstyle{sltheorems} 
\newtheorem{Thm}{Theorem}

\newtheorem{lem}{Lemma}

\newtheoremstyle{remark}% name
{10pt}%      Space above
{6pt}%      Space below
{\rm} %         Body font
{}%         Indent amount (empty = no indent, \parindent = para indent)
{\bfseries}% Thm head font
{.}%        Punctuation after thm head
{.5em}%     Space after thm head: " " = normal interword space;
      %       \newline = linebreak
{\thmname{#1}\thmnumber{ #2}\thmnote{ (#3)}}
 %         Thm head spec (can be left empty, meaning `normal')
 
\theoremstyle{remark} 
\newtheorem{Rem}{Remark}

% for coloured comments

\definecolor{carmine}{rgb}{0.7, 0.1, 0.09}

\makeatletter
\let\@@pmod\pmod
\DeclareRobustCommand{\pmod}{\@ifstar\@pmods\@@pmod}
\def\@pmods#1{\mkern4mu({\operator@font mod}\mkern 6mu#1)}
\makeatother

\usepackage{microtype} 

\usepackage{caption} 
\captionsetup[table]{skip=10pt}

%smaller curly bracket for the cases command
\makeatletter
\patchcmd{\env@cases}{1.2}{1}{}{}
\makeatother

\usepackage[pdftex, draft=false, backref = page]{hyperref}
\hypersetup{
    colorlinks=true,
    linkcolor=blue,
    urlcolor =blue, 
    citecolor = blue}

% backref settings
\renewcommand*{\backrefalt}[4]{%
\ifcase #1 %
\color{red} No citations.%
\or
(p.~#2).%
\else
(pp.~#2).%
\fi
}

\usepackage{mathtools}
\begin{document}

\title[Counting ideals in abelian number fields]{Counting ideals in abelian number fields}

\author{Alessandro Languasco,  Rashi Lunia and Pieter Moree}

\date{}

\subjclass{11R42, 11M41}

\begin{abstract}
\noindent 
Already Dedekind and Weber considered the problem of counting integral ideals of norm 
at most $x$ in a given number field $K$.
Here we improve on the existing results in case $K/\mathbb Q$ is abelian and has degree
at least four.  For these fields, we obtain as a consequence 
an improvement of the available results on counting 
pairs of coprime ideals each having norm at most $x$.
\end{abstract}
 
\maketitle

\section{Introduction}
Let $K$ be a number field of degree $n_K$, 
having ring of integers ${\mathcal O}_K$ and
$$N_K(x) \coloneq \#\{\mathfrak a\in {\mathcal O}_K \colon N{\mathfrak a}\le x\},$$
denote the number of integral ideals with norm at most $x$.
Since $N_{\mathbb Q}(x)=[x]$, we may assume that $n_K\ge 2$.
It is a classic problem going back to Dedekind and Weber to approximate 
$N_K(x)$ for large $x$.
In 1896, Weber \cite{Weber} 
(see also Marcus \cite[Ch.~6]{Marcus}) proved that
\begin{equation}
\label{Weber-estim}
N_{K}(x)
= 
\rho_{K} x + 
O_{K}\big(x^{1-1/n_{K}}\big),
\end{equation}
where $\rho_K$ denotes the residue at $1$ of the 
\emph{Dedekind zeta function} $\zeta_K(s)$.

In 1912, Landau \cite[Satz~210]{EL} established that
\begin{equation}
\label{Landau-estim}
N_{K}(x)
= 
\rho_{K} x + 
O_{K}\big(x^{1-2/(n_K + 1)}\big),
\end{equation}
see also Chandrasekharan--Narasimhan \cite[p.~128]{ChandraNara1962}.
Lee \cite[Thm.~1.2]{Lee} and Lowry-Duda et al.~\cite[Thm.~3]{LTT} established a more explicit version with 
error term 
\[
O_{n_K}\bigl(|d_K|^{1/(n_K+1)}x^{1-2/(n_K + 1)}(\log z)^{n_K-1}\bigr),
\]
with $z=|d_K|$ (with $d_K$ the discriminant of $K$), 
respectively $z=x$, where the implicit constant only depends on $n_K$. Lee, for his error term, 
even made the dependence on $n_K$ explicit. These results 
improve on Ph.D. work (1971) of J.E.S. Sunley, cf.~Lee 
\cite[Thm.~1.1]{Lee}.

In 1924, Landau \cite{LOmega}
proved that
\begin{equation}
\label{Landau-Omega}
N_K(x) = \rho_{K} x + 
\Omega(x^{1/2- 1/2n_K}),
\end{equation}
which much later was slightly improved by
Szeg\H{o}--Walfisz \cite{SzegoW} and Hafner \cite{Hafner}. 
In several papers it is stated 
(without reference\footnote{
The authors challenge the reader to provide 
them with such a reference.}) that it is conjectured that
\begin{equation}
\label{conj-estim}
N_K(x) = \rho_{K} x + 
O(x^{1/2-1/2n_K+\epsilon});
\end{equation}
hence, asymptotically on $n_K$, 
the error term is of order $\sqrt{x}$.
Ram Murty and Van Order \cite{RamVanOrder} 
give a nice exposition of the early work.

For quadratic $K$, Huxley \cite{MH1, MH2} obtained 
%(mentioned in a paper of Huxley and Watt, The number of ideals in a quadratic field , 
%Proc. Indian Acad. Sci. (Math. Sci.), Vol. 104, No. 1, (1994) 157--165. )
\begin{equation}
\label{Huxley-estim}
N_{K}(x)= \rho_{K} x + O_K(x^{23/73}(\log x)^{315/146}).
\end{equation}
M\"uller \cite{Muller} showed that if $K$ is cubic, then
\begin{equation}
\label{Muller-estim}
N_{K}(x)= \rho_{K} x + O_{K, \epsilon}(x^{43/96 + \epsilon}).
\end{equation}
For number fields $K$ with $n_K \ge 3$, Nowak \cite{Now} improved the error term in the estimate of Landau and 
showed that the exponent in \eqref{Landau-estim} can be replaced by 
$1-\frac{2}{n_K}+\frac{8}{n_K(5n_K+2)}+ \epsilon $ for $3\le n_K\le 6$
and by
$1-\frac{2}{n_K}+\frac{3}{2n_K^2} + \epsilon$ for $n_K\ge 7$.
The error term was further  improved by Lao \cite{Lao} 
and  Paul--Sankaranarayanan \cite{PS})
who showed that the exponent in \eqref{Landau-estim} 
can be replaced by 
$1-\frac{3}{n_K+6} + \epsilon$ for $n_K\ge 10$.
In case $K$ is cyclotomic, it was
also shown in \cite{PS} that
exponent in \eqref{Landau-estim} 
can be replaced by 
$1-\frac{3}{n_K+5} + \epsilon$ for $n_K\ge 8$.
 
We improve here
on the results of Lao and Paul--Sankaranarayanan 
for abelian extensions $K/\mathbb{Q}$ with $n_K\ge 4$. We do so by using deep results on moments of the
Riemann zeta function and Dirichlet $L$-functions.
\begin{Thm}
\label{idealcount}
Let $K/\mathbb{Q}$ be an abelian extension with $n_K \ge 4$. Then we have
\begin{equation}
\label{idealcountwitherror}    
N_{K}(x)
= 
\rho_{K} x + 
O_{K, \epsilon}\big(x^{1-\theta_K + \epsilon}\big),
\end{equation}
with $\rho_K$ the residue of the Dedekind zeta function of $K$ at $s=1$ 
and
\begin{equation}
\label{thetak}    
\theta_K 
= \begin{cases}
\frac{4}{n_K + 4} & \textrm{for}\ \ 4 \le n_K \le 12,\\[1.5ex]
\frac{3}{n_K -\delta} 
& \textrm{for}\ \ n_K \ge 13,
\end{cases}
\end{equation}
where $0\le\delta<1$ is any real number for
which
\begin{equation}\label{subconv0}
\zeta\Bigl(\frac{1}{2} + it\Bigr) \ll_\epsilon
 \Big\vert \frac12 + it \Big\vert^{(1-\delta)/6 + \epsilon}.
\end{equation}
\end{Thm}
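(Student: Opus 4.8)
The plan is to reduce $N_K(x)-\rho_K x$ to a mean value of $\zeta_K$ on the critical line and then to estimate that mean value through the factorisation of $\zeta_K$ into Dirichlet $L$-functions, feeding in the known high moments of $\zeta$ and $L(s,\chi)$ together with, for large $n_K$, the subconvex bound \eqref{subconv0}. Since $K/\mathbb{Q}$ is abelian, class field theory provides a group $X_K$ of primitive Dirichlet characters, of size $n_K$ and with conductors dividing $\mathrm{cond}(K)$, such that $\zeta_K(s)=\prod_{\chi\in X_K}L(s,\chi)$; the trivial character contributes the factor $\zeta(s)$ and the remaining $n_K-1$ factors are $L(s,\chi)$ with $\chi$ nontrivial. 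Consequently $\zeta_K$ is holomorphic on $\mathbb{C}$ apart from a simple pole at $s=1$ with residue $\rho_K$, satisfies a functional equation of degree $n_K$, and its coefficients $a_K(m)$ obey $a_K(m)\ll_\epsilon m^\epsilon$ and $N_K(x)=\sum_{m\le x}a_K(m)$; all implied constants below may depend on $n_K$ and on the conductors in $X_K$, but on nothing else.

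From the truncated Perron formula
\[
N_K(x)=\frac{1}{2\pi i}\int_{c-iT_0}^{c+iT_0}\zeta_K(s)\,\frac{x^s}{s}\,ds+O_{K,\epsilon}\!\Big(\frac{x^{1+\epsilon}}{T_0}\Big),\qquad c=1+\frac{1}{\log x},
\]
I would move the contour to the line $\Re s=\tfrac12$, the only singularity crossed being the simple pole at $s=1$, which contributes exactly $\rho_K x$. The horizontal segments at height $\pm T_0$ are the usual pinch point: the convexity bound $\zeta_K(\sigma+it)\ll|t|^{n_K(1-\sigma)/2+\epsilon}$ is too lossy to use naively — it only reproduces Landau's exponent from \eqref{Landau-estim} — so instead one picks the truncation height within a dyadic window along which the horizontal contribution does not exceed its mean (or, equivalently, inserts a Riesz-type smoothing and de-smooths afterwards), thereby keeping it $\ll x^{1+\epsilon}/T_0$. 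One is then left with
\[
N_K(x)-\rho_K x\ll_{K,\epsilon}\frac{x^{1+\epsilon}}{T_0}+x^{1/2}\int_1^{T_0}\bigl|\zeta_K(\tfrac12+it)\bigr|\,\frac{dt}{t}.
\]

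The heart of the matter is a bound of the form $\int_0^{T}|\zeta_K(\tfrac12+it)|\,dt\ll_{K,\epsilon}T^{\beta_K+\epsilon}$ with the optimal $\beta_K$. Here one writes $|\zeta_K(\tfrac12+it)|=|\zeta(\tfrac12+it)|\prod_{\chi\ne\chi_0}|L(\tfrac12+it,\chi)|$ and applies Hölder's inequality, assigning to each of the $n_K$ factors a power for which a sharp mean value is available: the fourth moment $\int_0^T|L(\tfrac12+it,\chi)|^4\,dt\ll_\chi T^{1+\epsilon}$ (Ingham, and likewise for $\zeta$), the twelfth moment $\int_0^T|L(\tfrac12+it,\chi)|^{12}\,dt\ll_\chi T^{2+\epsilon}$ (Heath-Brown, together with its $t$-aspect extension to Dirichlet $L$-functions), the moments in between obtained by a further Hölder interpolation, and — once $n_K>12$, so that twelve bounded moments no longer cover all the factors — the pointwise subconvex bound \eqref{subconv0} (and its Dirichlet analogue) applied to the surplus factors, which is exactly where $\delta$ enters. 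Working out the best distribution gives $\beta_K=(n_K+4)/8$ for $4\le n_K\le 12$ and the corresponding $\delta$-dependent value for $n_K\ge 13$; partial summation turns the integral in the last display into $x^{1/2}T_0^{\beta_K-1+\epsilon}$ (equal to $x^{1/2+\epsilon}$ when $\beta_K=1$), and choosing $T_0$ to balance this against $x^{1+\epsilon}/T_0$ yields \eqref{idealcountwitherror} with $\theta_K$ as recorded in \eqref{thetak}.

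The genuine difficulty, as always in problems of this type, is twofold. First, legitimising the passage to the critical line — disposing of the horizontal segments without surrendering the gain; the naive convexity bound would drag the final exponent all the way back to Landau's \eqref{Landau-estim}, so one must exploit a mean value of $\zeta_K$ along vertical lines (this is precisely the role of the Chandrasekharan–Narasimhan-type machinery cited above, whether phrased via Riesz means or via an averaged contour shift). Second, the Hölder distribution of the moments must be arranged with enough care to land on the exact exponents in \eqref{thetak}; for $n_K\ge 13$ this is the step where it matters that one has an explicit subconvex bound of the shape \eqref{subconv0} for $\zeta$ and for each Dirichlet $L$-function in $X_K$. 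Since the conductors are fixed once $K$ is fixed, only the $t$-aspect of these moment and subconvexity inputs is needed, so the classical estimates apply directly and all implied constants are safely absorbed into $O_{K,\epsilon}$.
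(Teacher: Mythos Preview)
Your plan is essentially the paper's: truncated Perron, shift to $\Re s=\tfrac12$, pick up $\rho_Kx$, bound the critical-line first moment of $\zeta_K$ by H\"older against the fourth and twelfth moments of the constituent Dirichlet $L$-functions (plus pointwise subconvexity on the surplus factors once $n_K\ge 13$), and balance $T_0$; your $\beta_K=(n_K+4)/8$ agrees with the paper's Lemma~3, and the unspecified ``$\delta$-dependent value'' for $n_K\ge 13$ is $(n_K-\delta)/6$ there.

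The one substantive divergence is the horizontal segments. You flag convexity as too weak and propose to sidestep the issue via Riesz smoothing or a dyadic choice of $T_0$. The paper instead handles them directly: it feeds the Weyl-type subconvexity $\zeta_K(\tfrac12+it)\ll_{K,\epsilon}|t|^{(n_K-\delta)/6+\epsilon}$ (obtained factor-by-factor from \eqref{subconv0} and its Dirichlet analogue) into Rademacher's Phragm\'en--Lindel\"of theorem to get $\zeta_K(\sigma+it)\ll_{K,\epsilon}|t|^{(n_K-\delta)(1-\sigma)/3+\epsilon}$ on the whole half-strip $\tfrac12\le\sigma\le 1+\epsilon$ (their Lemma~2). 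This makes the horizontal contribution $\ll x^{1/2+\epsilon}T^{-1}\bigl(x^{1/2}+T^{(n_K-\delta)/6}\bigr)$, which is dominated by the vertical term in every range, so no smoothing or averaging is needed. Your smoothing route is genuinely viable (the de-smoothing error is indeed $\ll x^{1+\epsilon}/T_0$), but the dyadic-window variant as you state it would still require a first-moment bound for $\zeta_K(\sigma+it)$ on the intermediate lines $\tfrac12<\sigma<1$, which you have not supplied; the paper's device avoids this entirely and is the cleaner way to close the argument.
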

We note that by the 
celebrated Hardy--Littlewood--Weyl estimate we can take $\delta=0$ 
in \eqref{subconv0}; Bourgain  \cite{JB} currently holds the record here, namely $\delta = 1/14$.

For the convenience of the reader we record some values of $\theta_K$ and of the exponent in the error
term in \eqref{idealcountwitherror} occurring in Theorem \ref{idealcount} with $\delta = 1/14$
for $13\le n_K \le 15$:
\begin{table}[htp] 
\scalebox{0.8}{
\renewcommand{\arraystretch}{1.3}
\begin{tabular}{ | c | c | l | l |}
\hline
$n_K$ & $\theta_K$ in \eqref{thetak} &  \phantom{01}$1-\theta_K$ in \eqref{idealcountwitherror} & conjectural in \eqref{conj-estim}\\ \hline
$4$ & $1/2$   & $1/2  = 0.5$         & $3/8  = 0.375$       \\ \hline 
$5$ & $4/9$   & $5/9  = 0.555\dotsc$ & $2/5  = 0.4$         \\ \hline 
$6$ & $2/5$   & $3/5  = 0.6$   & $5/12 = 0.416\dotsc$ \\ \hline 
$7$ & $4/11$  & $7/11 = 0.636\dotsc$ & $3/7  = 0.428\dotsc$ \\ \hline 
$8$ & $1/3$   & $2/3  = 0.666\dotsc$ & $7/16 = 0.437\dotsc$ \\ \hline 
$9$ & $4/13$  & $9/13 = 0.692\dotsc$ & $4/9  = 0.444\dotsc$ \\ \hline 
\end{tabular}
\hskip0.25cm
\begin{tabular}{ | l | c | l | l |}
\hline
$n_K$ & $\theta_K$ in \eqref{thetak} & \phantom{012}$1-\theta_K$ in \eqref{idealcountwitherror} & conjectural in \eqref{conj-estim}\\ \hline
$10$ & $2/7$    & $5/7     = 0.714\dotsc$ & $9/20  = 0.45$        \\ \hline  
$11$ & $4/15$   & $11/15   = 0.733\dotsc$ & $5/11  = 0.454\dotsc$ \\ \hline  
$12$ & $1/4$    & $3/4     = 0.75$        & $11/24 = 0.458\dotsc$ \\ \hline  
$13$ & $42/181$ & $139/181 = 0.767\dotsc$ & $6/13  = 0.461\dotsc$ \\ \hline  
$14$ & $14/65$  & $51/65   = 0.784\dotsc$ & $13/28 = 0.464\dotsc$ \\ \hline  
$15$ & $42/209$ & $167/209 = 0.799\dotsc$ & $7/15  = 0.466\dotsc$ \\ \hline  
\end{tabular}
}
\end{table}

For $n_K=4$, the error term in Theorem \ref{idealcount} is $O(x^{1/2+\epsilon})$.
In Remark \ref{RMT-remark} we sketch a conjectural approach that allows one to obtain an 
error term of the size  $x^{1/2+\epsilon}$ for $n_k\ge 5$ too.
 
The same technique used in the proof of Theorem \ref{idealcount} 
can also handle the cases $n_K=2,3$ with an error term 
$\ll_{K,\epsilon} x^{1/2+\epsilon}$,
which is unfortunately weaker than
the results of Huxley and M\"uller mentioned before.
Their technique relates the ideal 
counting result to a generalized divisor problem
but, currently, it is not 
clear how to generalize Huxley--M\"uller's work to higher degrees cases.

The proof of Theorem \ref{idealcount} heavily relies on results
about moments of the Dirichlet $L$-functions attached to Dirichlet
characters modulo $q$. 
It is currently not known how  to obtain an estimate as sharp as the above one
for non-abelian number fields, as their Dedekind zeta function is not factorable 
as a finite product of only Dirichlet $L$-functions.

\subsection{Counting coprime ideals}
%\label{sec:improvedcounting}
 
In many textbooks on elementary analytic number theory,  for example 
Apostol \cite[Thm.~3.9]{Apostol}, one can find a count of the number of coprime positive integers $a,b$ with $a,b\le x$ due
to Mertens (1874). 
A variation of his proof, using 
Weber's estimate \eqref{Weber-estim}, then leads to
the following generalization, see Sittinger \cite{Sittinger}, 
valid as $x$ tends to infinity,
\begin{equation}
\label{Sittingerror}    
	\sum_{\substack{N\mathfrak a,N\mathfrak b\le x
			\\ \mathfrak a+\mathfrak b=\mathcal O_{K}}} 1
	=
	\frac{N_K(x)^2 }{\zeta_K(2)}
	+O_{K}(x^{2-1/n_K}\log x).
\end{equation}
The same argument, but now
rather based on \eqref{Landau-Omega} and \eqref{conj-estim}, suggests that the optimal exponent in the error term in \eqref{Sittingerror}
is $\frac{3}{2}-\frac{1}{2n_K}$. 
The current best general result has the exponent
$ 2-\frac{3}{n_K + 6}+ \epsilon$ in the
error term for $n_K \ge 10$ (see the works of Takeda and Takeda--Koyama \cite{WTpreprint, TK}).
Here we will improve on this in case $K/\mathbb Q$ is abelian of degree  $\ge 4$.
 \begin{Thm}
\label{coprimeidealcount}
Let $K/\mathbb{Q}$ be an abelian extension with $n_K \ge 4$ and $\theta_K$ 
be as in \eqref{thetak}.
Then,
\begin{equation}
\label{coprimeidealcountwitherror}
	\sum_{\substack{N\mathfrak a,N\mathfrak b\le x
			\\ \mathfrak a+\mathfrak b=\mathcal O_{K}}} 1
	=
	\frac{N_{K}(x)^2 }{\zeta_{K}(2)}
	+
	O_{K, \epsilon}(x^{2-\theta_K+ \epsilon}),
\end{equation}
\end{Thm}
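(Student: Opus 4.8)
The plan is to deduce Theorem~\ref{coprimeidealcount} from Theorem~\ref{idealcount} by the ideal-theoretic version of Mertens' argument. Detecting coprimality through the M\"obius function $\mu_K$ of $K$ (so that $\sum_{\mathfrak d\mid\mathfrak a,\ \mathfrak d\mid\mathfrak b}\mu_K(\mathfrak d)$ is the indicator of $\mathfrak a+\mathfrak b=\mathcal O_K$) and grouping the ideals $\mathfrak a,\mathfrak b$ according to the common divisor $\mathfrak d$ (using unique factorization of ideals, $\#\{\mathfrak a:\,N\mathfrak a\le x,\ \mathfrak d\mid\mathfrak a\}=N_K(x/N\mathfrak d)$), one obtains
\[
\sum_{\substack{N\mathfrak a,N\mathfrak b\le x\\ \mathfrak a+\mathfrak b=\mathcal O_{K}}}1
=\sum_{N\mathfrak d\le x}\mu_K(\mathfrak d)\,N_K\!\Big(\frac{x}{N\mathfrak d}\Big)^{2}.
\]
Into the right-hand side I would insert the estimate $N_K(y)=\rho_K y+O_{K,\epsilon}\bigl(y^{1-\theta_K+\epsilon}\bigr)$ of Theorem~\ref{idealcount}, valid for every $y\ge 1$ (and every term above has $y=x/N\mathfrak d\ge 1$), and expand the square as $\rho_K^2y^2+O_{K,\epsilon}(y^{2-\theta_K+\epsilon})+O_{K,\epsilon}(y^{2-2\theta_K+2\epsilon})$.

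The main term then becomes $\rho_K^2x^2\sum_{N\mathfrak d\le x}\mu_K(\mathfrak d)(N\mathfrak d)^{-2}$. I would complete this to the full series, using the Euler-product identity $\sum_{\mathfrak d}\mu_K(\mathfrak d)(N\mathfrak d)^{-2}=1/\zeta_K(2)$, and control the tail by $\sum_{N\mathfrak d>x}(N\mathfrak d)^{-2}\ll_K x^{-1}$, which follows by partial summation from $\sum_{m\le t}a_K(m)=N_K(t)\ll_K t$, where $a_K(m)=\#\{\mathfrak a:\,N\mathfrak a=m\}$. This yields the term $\rho_K^2x^2/\zeta_K(2)$ with an admissible error $O_K(x)$. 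For the two error terms I would estimate $\sum_{N\mathfrak d\le x}(N\mathfrak d)^{-(2-\theta_K+\epsilon)}$ and $\sum_{N\mathfrak d\le x}(N\mathfrak d)^{-(2-2\theta_K+2\epsilon)}$: since $0<\theta_K\le 1/2$, both exponents exceed $1$, so both Dirichlet series converge (bounded by $\zeta_K$ at the relevant point, again via $\sum_{m\le t}a_K(m)\ll_K t$), giving contributions $O_{K,\epsilon}(x^{2-\theta_K+\epsilon})$ and $O_{K,\epsilon}(x^{2-2\theta_K+2\epsilon})$, the latter being dominated by the former because $\theta_K>0$.

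Finally, to match the form of \eqref{coprimeidealcountwitherror} I would apply Theorem~\ref{idealcount} once more in the shape $N_K(x)^2/\zeta_K(2)=\rho_K^2x^2/\zeta_K(2)+O_{K,\epsilon}(x^{2-\theta_K+\epsilon})$, so the explicit main term $\rho_K^2x^2/\zeta_K(2)$ may be replaced by $N_K(x)^2/\zeta_K(2)$ at no cost. There is no genuine obstacle here --- this is the classical convolution (hyperbola-free) manipulation --- and the only point requiring care is the bookkeeping that shows every secondary error (the square term from expanding $N_K(x/N\mathfrak d)^2$, and the tail of the $\mu_K$-series) is of strictly lower order than $x^{2-\theta_K+\epsilon}$; this is precisely where the inequalities $0<\theta_K<1$ are used, and it is the reason the exponent in \eqref{coprimeidealcountwitherror} is exactly the one inherited from Theorem~\ref{idealcount}.
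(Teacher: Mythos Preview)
Your proposal is correct and follows essentially the same route as the paper: the same M\"obius--inclusion/exclusion identity, the same squaring of the estimate from Theorem~\ref{idealcount}, completion of the $\mu_K$-series to $1/\zeta_K(2)$, and the final swap of $\rho_K^2x^2$ for $N_K(x)^2$. The only cosmetic difference is that the paper bounds the tail $\sum_{N\mathfrak d>x}(N\mathfrak d)^{-2}$ via the pointwise divisor estimate $a_K(m)\le d(m)^{n_K}\ll m^{\epsilon}$, whereas you use partial summation from $N_K(t)\ll_K t$; both give an error absorbed by $x^{2-\theta_K+\epsilon}$.
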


Mertens \cite{FM} established \eqref{coprimeidealcountwitherror} with error term $O(x\log x)$ for $K= \mathbb{Q}$. For quadratic extensions,
it holds with error term 
$O_K(x^{96/73}(\log x)^{315/146})$ by the Huxley's estimate \eqref{Huxley-estim}. 
 If $K$ is cubic, M\"uller's estimate \eqref{Muller-estim} gives rise to an error term of size 
$O_{K, \epsilon}(x^{139/96 + \epsilon})$ in \eqref{coprimeidealcountwitherror}.
Takeda \cite{Takeda}, assuming the Extended Lindel\"of Hypothesis (ELH) holds, 
proved Theorem \ref{coprimeidealcount} with the exponent $2-\theta_K$ replaced by $3/2$ 
for all algebraic number fields
$K$; we remark that our 
unconditional result leads to the same estimate as Takeda's in case $n_K=4$.
For $n_K\ge 5$ another conjectural approach capable of obtaining 
the exponent $3/2+\epsilon$ in 
Theorem \ref{coprimeidealcount}
follows from the argument in Remark \ref{RMT-remark}.

\section{Proof of Theorem \ref{idealcount}}
Our proof of Theorem \ref{idealcount} for a number field $K$ will crucially rely on properties of its \emph{Dedekind zeta function} 
$\zeta_K(s)$, which for $\Re (s)>1$ is defined by
$$
\zeta_K(s)=\sum_{\mathfrak{a}} \frac{1}{(N{\mathfrak{a}})^{s}}.
$$
Here, $\mathfrak{a}$ runs over 
the non-zero ideals in  the ring of integers ${\mathcal O}_K$ of $K$ and $N{\mathfrak{a}}$ denotes the norm
of $\mathfrak{a}$. 
It is known that $\zeta_K(s)$ can be analytically continued to $\mathbb C \setminus \{1\}$,
and that at $s=1$ it has a simple pole with residue $\rho_K$. For an abelian extension $K/\mathbb{Q}$, 
we  have the factorization (see, e.g., Lang \cite{Lang})
\begin{equation}
\label{product}    
\zeta_K(s)= \prod_{j=0}^{n_K-1}L(s, \chi_j),
\end{equation}
where $\chi_0$ is the principal character and $\chi_j$ is a primitive Dirichlet character of conductor $q_j$.
By the conductor discriminant formula, the conductors $q_j$ divide the discriminant $d_K$ of $K$. For other elementary properties
of $\zeta_K(s)$ we refer to the book of Marcus \cite[Ch.~7]{Marcus}.

Our proof of Theorem \ref{idealcount} rests on three lemmas, of which
the first holds for general number fields $K$. It is
a form of Perron's formula with error term and can be obtained as
a consequence of a modification 
of Landau's lemma \cite[Lemma~11]{PS} (see also Chandrasekharan's book \cite[p.~115]{KC}).

\begin{lem}\label{ModLandau}
Let $K$ be a number field. For $1< \eta < 2$, $T>0$ and $x>2$, 
\begin{equation*}
    \Big\vert\sum_{n \le x}a_n -\frac{1}{2\pi i}\int_{\eta - iT}^{\eta + iT}\zeta_K(s)\frac{x^s}{s}\ ds\Big\vert
    < c_K(\epsilon) 
    \Bigl( \frac{x^{\eta}}{T(1- \eta)} +
\frac{x^{1 + \epsilon}}{T} + x^{\epsilon} \Bigr),
\end{equation*}
where the constant $c_K(\epsilon)$ depends only on $K$ and $\epsilon$ and is independent of $\eta, T$ and $x$.
\end{lem}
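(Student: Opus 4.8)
The plan is to derive this from the truncated Perron formula applied to the Dirichlet series $\zeta_K(s)=\sum_{n\ge 1} a_n n^{-s}$, where $a_n=\#\{\mathfrak a\colon N\mathfrak a = n\}$ is the ideal-counting coefficient. First I would recall that the effective form of Landau's lemma (as in \cite[Lemma~11]{PS} or \cite[p.~115]{KC}) gives, for a Dirichlet series with abscissa of absolute convergence $\sigma_a$ and nonnegative coefficients,
\begin{equation*}
\Big\vert\sum_{n\le x} a_n - \frac{1}{2\pi i}\int_{\eta-iT}^{\eta+iT}\zeta_K(s)\frac{x^s}{s}\,ds\Big\vert
\ll \sum_{n}a_n\Big(\frac{x}{n}\Big)^{\eta}\min\Big(1,\frac{1}{T\vert\log(x/n)\vert}\Big),
\end{equation*}
valid for $\eta>\sigma_a$; here $\sigma_a=1$ for $\zeta_K$. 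The task is then to bound the right-hand side by the three stated terms.

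Next I would split the sum over $n$ into the usual three ranges according to the distance of $n$ from $x$. For $n$ with $x/2 < n < 2x$ (the ``near'' range), I would use the trivial bound $(x/n)^\eta \ll 1$ together with $\vert\log(x/n)\vert \gg \vert x-n\vert/x$, and estimate $\sum a_n$ over short intervals using the crude bound $a_n \ll_\epsilon n^\epsilon \cdot (\text{something depending on }K)$ — more precisely, $\sum_{n\le y} a_n \ll_K y$ by Weber's estimate \eqref{Weber-estim}, so over an interval of length $\ell$ near $x$ we get $\ll_K \ell$. This yields a contribution $\ll_K x^\epsilon + x^{1+\epsilon}/T$ after summing the dyadic pieces: the isolated term $n$ closest to $x$ contributes $\ll_K x^\epsilon$ (here one absorbs $a_n\ll_\epsilon x^\epsilon$), and the rest contributes $\ll_K (x^{1+\epsilon}/T)$. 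For the ``far'' ranges $n\le x/2$ and $n\ge 2x$, one has $\vert\log(x/n)\vert\gg 1$, so the weight is $\ll 1/T$, and summing $(x/n)^\eta$ over $n \le x/2$ gives $\ll x^\eta/(T(\eta-1))$ via comparison with $\int_1^{x/2}(x/t)^\eta\,dt$, while the tail $n\ge 2x$ is handled by absolute convergence of $\zeta_K(\eta)$ and is $\ll_K x^\eta/T$, absorbed into the $x^{1+\epsilon}/T$ term since $\eta<2$. Collecting everything gives the claimed bound with the sign of $(1-\eta)$ making $x^\eta/(T(1-\eta))$ the correct (positive after taking absolute values, or simply as written) form of the dyadic-sum term.

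The main obstacle — and really the only nonroutine point — is getting the dependence on $K$ packaged cleanly into a single constant $c_K(\epsilon)$ that is genuinely independent of $\eta$, $T$, and $x$. This requires being careful that: (i) the bound $\sum_{n\le y}a_n \ll_K y$ has its implied constant depending only on $K$ (which is exactly Weber \eqref{Weber-estim}, or even the trivial $\zeta_K(s)$ Euler-product bound); (ii) the bound $a_n \ll_\epsilon n^\epsilon$ for the single near-diagonal term is uniform (this is the standard divisor-type bound, with implied constant depending only on $n_K$ and $\epsilon$); and (iii) in the far range the factor $1/(\eta-1)$ is kept explicit rather than hidden, since $\eta$ will later be chosen as $1 + 1/\log x$, at which point $1/(\eta-1)=\log x$ and $x^\eta \ll x$, so $x^\eta/(T(\eta-1)) \ll x\log x/T$ matches the order one expects. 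All of these are classical and the bookkeeping is short; I would simply cite \cite[Lemma~11]{PS} for the precise effective Perron statement and then carry out the three-range split, remarking that the $\epsilon$'s are harmless.
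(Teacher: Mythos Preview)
Your proposal is correct and matches the paper's approach: the paper itself does not prove this lemma but merely states that it ``can be obtained as a consequence of a modification of Landau's lemma \cite[Lemma~11]{PS} (see also Chandrasekharan's book \cite[p.~115]{KC})'', and your three-range split of the truncated Perron error is exactly the standard argument behind those references. One small fix: in the far range you should bound the whole sum at once by $(x^\eta/T)\sum_n a_n n^{-\eta}=(x^\eta/T)\zeta_K(\eta)\ll_K x^\eta/(T(\eta-1))$ rather than comparing with $\int(x/t)^\eta\,dt$ as though $a_n=1$; this also correctly absorbs the tail $n\ge 2x$ into the $x^\eta/(T(\eta-1))$ term rather than into $x^{1+\epsilon}/T$ as you wrote.
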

For the second, we will use a refinement of the Phragm\'en--Lindel\"of theorem due to Rademacher \cite[Thm.~2]{Rademacher}
to bound the Dedekind zeta function in the right half of the critical strip.
\begin{lem}\label{Rademacher}
Let $K$ be an abelian number field of degree $n_K$
and $\epsilon > 0$ be fixed. 
For $1/2 \le  \sigma \le 1+ \epsilon$ and $t\in \mathbb R$, we have
\begin{equation*}
\vert \zeta_{K}(\sigma + it)| 
\ll_{K , \epsilon}
(1 + |t|)^{(n_K - \delta)(1-\sigma) / 3 + \epsilon},
\end{equation*}
where $0\le\delta<1$ 
is any real number for which 
the estimate \eqref{subconv0} holds.
\end{lem}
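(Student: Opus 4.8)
The plan is to estimate each factor of the decomposition \eqref{product} separately on the two vertical lines $\sigma=1/2$ and $\sigma=1+\epsilon$, and then to interpolate between them by means of Rademacher's refinement of the Phragmén--Lindelöf theorem \cite[Thm.~2]{Rademacher}. Write $\zeta_K(s)=\zeta(s)\prod_{j=1}^{n_K-1}L(s,\chi_j)$, where $L(s,\chi_0)=\zeta(s)$ is the factor attached to the principal character and each $\chi_j$ with $j\ge 1$ is a nontrivial primitive character of conductor $q_j\mid d_K$. Since $q_j\le|d_K|$, every implied constant occurring below depends only on $K$ and on $\epsilon$, as required in the statement.

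On the line $\sigma=1+\epsilon$ the Dirichlet series defining $\zeta_K$ converges absolutely, so $|\zeta_K(1+\epsilon+it)|\le\zeta_K(1+\epsilon)\le\zeta(1+\epsilon)^{n_K}\ll_{K,\epsilon}1$. On the critical line I would invoke subconvexity factor by factor: the hypothesis \eqref{subconv0} gives $\zeta(\tfrac12+it)\ll_{\epsilon}(1+|t|)^{(1-\delta)/6+\epsilon}$, while for each of the remaining $n_K-1$ factors the classical Weyl--van der Corput bound for a Dirichlet $L$-function of fixed conductor (equivalently, the corresponding subconvexity estimate for the Hurwitz zeta functions $\zeta(s,a/q_j)$ via $L(s,\chi_j)=q_j^{-s}\sum_{a=1}^{q_j}\chi_j(a)\zeta(s,a/q_j)$) yields $L(\tfrac12+it,\chi_j)\ll_{K,\epsilon}(1+|t|)^{1/6+\epsilon}$. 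Multiplying the $n_K$ estimates gives $|\zeta_K(\tfrac12+it)|\ll_{K,\epsilon}(1+|t|)^{(1-\delta)/6+(n_K-1)/6+\epsilon}=(1+|t|)^{(n_K-\delta)/6+\epsilon}$, which is exactly the boundary bound needed. Note that the whole saving coming from $\delta$ is carried by the single $\zeta$-factor, and that the convexity bound alone for the $L(s,\chi_j)$ would be too weak to reach the exponent $(n_K-\delta)/6$.

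To pass from these two boundary estimates to the whole strip $1/2\le\sigma\le 1+\epsilon$, I would apply Rademacher's theorem, which interpolates the exponent linearly in $\sigma$. Because $\zeta_K$ has a simple pole at $s=1$ lying inside the strip, the theorem must be applied instead to the entire function $g(s)=(s-1)\zeta_K(s)$, which is of finite order there; its boundary bounds are $|g(1+\epsilon+it)|\ll_{K,\epsilon}1+|t|$ and $|g(\tfrac12+it)|\ll_{K,\epsilon}(1+|t|)^{1+(n_K-\delta)/6+\epsilon}$, so Rademacher's theorem delivers $|g(\sigma+it)|\ll_{K,\epsilon}(1+|t|)^{1+(n_K-\delta)(1-\sigma)/3+\epsilon}$, the width $1/2+\epsilon$ of the strip producing only a harmless perturbation of $\epsilon$. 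Dividing by $|s-1|$, which satisfies $|s-1|\gg 1+|t|$ as soon as $|t|\ge 1$, recovers the asserted bound for $|t|\ge1$; for bounded $|t|$ with $s$ away from the pole, $\zeta_K$ is trivially bounded while the right-hand side is $\asymp 1$, so the estimate holds there too.

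The part I expect to need the most care is organisational rather than conceptual: one must verify that the subconvexity input used for the $n_K-1$ primitive factors is genuinely uniform in $t$ with exponent $1/6$, so that the exponents add up to $(n_K-\delta)/6$ and Rademacher's interpolation lands exactly on $(n_K-\delta)(1-\sigma)/3$, and one must dispose correctly of the pole of $\zeta_K$ at $s=1$ before invoking the holomorphic Phragmén--Lindelöf theorem. The remaining ingredients — absolute convergence on $\sigma=1+\epsilon$, the finite-order hypothesis in Rademacher's theorem, and the linear interpolation of exponents — are entirely routine.
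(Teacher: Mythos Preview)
Your proposal is correct and follows essentially the same route as the paper: bound $\zeta_K$ on $\sigma=1/2$ via the factorization \eqref{product} together with \eqref{subconv0} for the $\zeta$-factor and the Weyl-type bound $L(\tfrac12+it,\chi_j)\ll_{K,\epsilon}(1+|t|)^{1/6+\epsilon}$ for the remaining factors, bound trivially on $\sigma=1+\epsilon$, and interpolate via Rademacher's Phragm\'en--Lindel\"of theorem. The paper applies Rademacher's result directly to $f(s)=\zeta_K(s)$ (with $a=1/2$, $b=1+\epsilon$, $Q=0$) rather than to $(s-1)\zeta_K(s)$; your explicit disposal of the pole is a harmless, arguably more careful, variation of the same argument.
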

\begin{proof}

We start by bounding $\zeta_K(s)$ on the vertical lines $\sigma=1/2$ and $\sigma=1+\epsilon$.
Using \eqref{product},
the subconvexity bound \eqref{subconv0}
 and 
\begin{equation}\label{subconv}
L\Bigl(\frac{1}{2} + it, \chi \Bigr) 
\ll_{K, \epsilon} 
\Big\vert \frac12 + it \Big\vert^{1/6 + \epsilon},
\end{equation}
(see, e.g., Prachar \cite[p.~328, eq.~(4.12)]{Prachar})
we deduce that
$$
\Bigl\vert \zeta_{K}\Bigl(\frac{1}{2} + it\Bigr) \Bigr\vert \le  B_{K, \epsilon} 
\Big\vert \frac12 + it \Big\vert^{(n_K - \delta)/6 + \epsilon}
$$
for some constant $B_{K, \epsilon}$.
Currently, the best known value of $\delta$ is $1/14$ (Bourgain \cite{JB}).
For $\sigma=1+$ $\epsilon$, 
we use the trivial bound  
$$
\vert \zeta_{K}(1+ \epsilon + it) \vert \le \zeta_K( 1+ \epsilon)
$$
for all $t \in \mathbb{R}$.
Now applying Rademacher's result with
$f(s) = \zeta_K(s)$,
$a = 1/2$, $b = 1+ \epsilon$, 
$\alpha = (n_K- \delta)/6 + \epsilon$,
$\beta = 0$ and $Q =0$, we obtain
\[
\vert \zeta_K(\sigma + it) \vert
\le
C_{K, \epsilon }
|\sigma + it|^{((n_K-\delta)/3 + 2\epsilon)( 1+ \epsilon -  \sigma)/(1 + 2\epsilon) } 
\ll_{K , \epsilon}
(1 + |t|)^{(n_K-\delta)(1-\sigma) / 3 + \epsilon}.
\qedhere
\popQED
\] 
\end{proof}

The third gives an upper bound on the first moment of the Dedekind zeta function of 
an abelian number field, which might be of independent interest.
\begin{lem}\label{moment}
Let $K$ be an abelian number field with $n_K = [K \colon {\mathbb Q}] \ge 4$ and $\epsilon > 0$ be fixed. 
For $T \ge 2$ we have 
\begin{equation*}
    \int_T^{2T}\Bigl\vert \zeta_{K}\Bigl(\frac{1}{2} + it\Bigr) \Big\vert\, dt
    \ll_{K, \epsilon} 
    \begin{cases}
    T^{n_K/8 + 1/2   + \epsilon} & \ \textrm{for}\ \ 4 \le n_K \le 12,\\
    T^{(n_K - \delta)/6 + \epsilon} &\ \textrm{for}\ \  n_K \ge 13,
    \end{cases}
\end{equation*}
where $0\le\delta<1$ 
is any real number for which 
the estimate \eqref{subconv0} holds.
\end{lem}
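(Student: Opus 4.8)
The plan is to deduce the bound from the factorization \eqref{product} together with power moment estimates for Dirichlet $L$-functions, distributed over the factors by H\"older's inequality. On the critical line \eqref{product} gives
$$\bigl|\zeta_K(\tfrac12+it)\bigr|=\bigl|\zeta(\tfrac12+it)\bigr|\prod_{j=1}^{n_K-1}\bigl|L(\tfrac12+it,\chi_j)\bigr|,$$
since the principal character $\chi_0$ is primitive of conductor $1$, so that $L(s,\chi_0)=\zeta(s)$. The two analytic inputs are: the fourth moment bound $\int_T^{2T}|L(\tfrac12+it,\chi)|^{4}\,dt\ll_{K,\epsilon}T^{1+\epsilon}$ and the twelfth moment bound $\int_T^{2T}|L(\tfrac12+it,\chi)|^{12}\,dt\ll_{K,\epsilon}T^{2+\epsilon}$ — for $\chi$ trivial these are the classical fourth moment estimate for $\zeta$ and Heath--Brown's twelfth moment estimate — valid for every Dirichlet character $\chi$ of conductor dividing $d_K$; and the subconvexity bounds \eqref{subconv0} and \eqref{subconv}, which on $[T,2T]$ read $|\zeta(\tfrac12+it)|\ll_\epsilon T^{(1-\delta)/6+\epsilon}$ and $|L(\tfrac12+it,\chi)|\ll_{K,\epsilon}T^{1/6+\epsilon}$. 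As every exponent below is $\ge 1$, restricting the standard moment bounds from $[0,2T]$ to $[T,2T]$ costs nothing.

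For $4\le n_K\le 12$ I would apply H\"older's inequality to the $n_K$ factors of $\zeta_K(\tfrac12+it)$, each raised to the power $n_K$:
$$\int_T^{2T}\bigl|\zeta_K(\tfrac12+it)\bigr|\,dt\le\prod_{j=0}^{n_K-1}\Bigl(\int_T^{2T}\bigl|L(\tfrac12+it,\chi_j)\bigr|^{n_K}\,dt\Bigr)^{1/n_K}.$$
With $\lambda=(n_K-4)/8\in[0,1]$, interpolating the fourth and twelfth moments by H\"older (trivially so when $\lambda\in\{0,1\}$) gives $\int_T^{2T}|L(\tfrac12+it,\chi_j)|^{n_K}\,dt\ll_{K,\epsilon}T^{(1-\lambda)+2\lambda+\epsilon}=T^{1+\lambda+\epsilon}=T^{n_K/8+1/2+\epsilon}$. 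Feeding this back into the product of $n_K$ equal factors, each with exponent $1/n_K$, yields the claimed bound $\ll_{K,\epsilon}T^{n_K/8+1/2+\epsilon}$.

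For $n_K\ge 13$ twelve moments no longer suffice, so I would estimate $n_K-12$ of the factors pointwise and average over the remaining twelve Dirichlet $L$-factors. Applying \eqref{subconv0} to the $\zeta$-factor and \eqref{subconv} to $n_K-13$ of the characters $\chi_j$ bounds the product of those $n_K-12$ factors by $\ll_{K,\epsilon}(1+|t|)^{(1-\delta)/6+(n_K-13)/6+\epsilon}$, hence by $\ll_{K,\epsilon}T^{(n_K-12-\delta)/6+\epsilon}$ on $[T,2T]$. For the remaining twelve Dirichlet $L$-factors, H\"older's inequality (each raised to the power $12$) together with the twelfth moment bound gives
$$\int_T^{2T}\prod_j\bigl|L(\tfrac12+it,\chi_j)\bigr|\,dt\le\prod_j\Bigl(\int_T^{2T}\bigl|L(\tfrac12+it,\chi_j)\bigr|^{12}\,dt\Bigr)^{1/12}\ll_{K,\epsilon}T^{2+\epsilon}.$$
Multiplying the two estimates and using $(n_K-12)/6+2=n_K/6$ gives $\int_T^{2T}|\zeta_K(\tfrac12+it)|\,dt\ll_{K,\epsilon}T^{(n_K-\delta)/6+\epsilon}$, as claimed.

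I expect the only genuinely non-elementary ingredient to be the twelfth power moment of Dirichlet $L$-functions in the $t$-aspect; since $K$ is fixed, only the finitely many conductors dividing $d_K$ occur, so it suffices to have this bound for each fixed character with an implied constant depending on the conductor, and this is precisely where the deep moment results enter. Everything else is H\"older bookkeeping together with the subconvexity bounds \eqref{subconv0} and \eqref{subconv}; it is the strength of the available twelfth- (and higher-) moment estimates that governs how small $\theta_K$ can be taken.
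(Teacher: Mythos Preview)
Your proof is correct. For $n_K\ge 13$ it coincides with the paper's argument. For $4\le n_K\le 12$ you take a slightly different and cleaner route: the paper splits into even and odd $n_K$, in the even case distributing the $n_K$ factors between the fourth and twelfth moments in integer proportions, and in the odd case additionally invoking the sixth-moment bound $\int_T^{2T}|\zeta(\tfrac12+it)|^6\,dt\ll T^{5/4+\epsilon}$ for the $\zeta$-factor so that the H\"older exponents add up. Your symmetric H\"older step followed by the real interpolation $|L|^{n_K}=|L|^{4(1-\lambda)}|L|^{12\lambda}$ with $\lambda=(n_K-4)/8$ handles all $n_K\in\{4,\dots,12\}$ uniformly and dispenses with the sixth moment altogether. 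The paper's integer allocation (which it justifies via a linear-programming remark) and your continuous interpolation yield the same exponent $n_K/8+1/2$, so nothing is gained or lost in strength, only in bookkeeping.
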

\begin{proof}
We will 
estimate the first moment of $\zeta_K$ 
by exploiting its factorization into Dirichlet
$L$-functions as given by \eqref{product} and
using the moment 
estimates \eqref{moments1}-\eqref{moments3}.
We shall make use of the currently available bounds on moments of Dirichlet $L$-functions.
For the fourth moment 
using the results of Ingham \cite{Ing}
and 
Montgomery  \cite[Thm.~10.1]{MontgomeryTopics}, we have
\begin{equation}
\label{moments1}
\int_{T}^{2T}\ \Bigl\vert
\zeta\Bigl(\frac{1}{2} + it \Bigr)
\Bigr\vert^4 dt
\ll
T^{1 + \epsilon}
\qquad
\textrm{and}
 \qquad 
\int_{T}^{2T}\ \Bigl\vert
L\Bigl(\frac{1}{2} + it, \chi \Bigr)
\Bigr\vert^4 dt
\ll
qT^{1 + \epsilon},
\end{equation}
where $q$ is the conductor of $\chi$.
From the works of Heath-Brown \cite{HB1} and Meurman \cite[p.~25, Thm.~4]{Meurman} we have
\begin{equation}
\label{moments2}
\int_{T}^{2T}\ \Bigl\vert
\zeta\Bigl(\frac{1}{2} + it \Bigr)
\Bigr\vert^{12} dt
\ll
T^{2+ \epsilon}
\qquad
\textrm{and}
 \qquad 
\int_{T}^{2T}\ \Bigl\vert
L\Bigl(\frac{1}{2} + it, \chi \Bigr)
\Bigr\vert^{12} dt
\ll q^3T^{2+ \epsilon}.
\end{equation}
Further, we use the 
following bound on the sixth moment of the Riemann zeta function (see Ivi\'c \cite[Thm.~8.3]{Ivic}):
\begin{equation}
\label{moments3}
\int_{T}^{2T}\ \Bigl\vert
\zeta\Bigl(\frac{1}{2} + it \Bigr)
\Bigr\vert^6\, dt
\ll
T^{5/4 + \epsilon}.
\end{equation}

First we consider the case  when $4 \le n_K \le 12$ is even. Let $m_K = 6-\frac{n_K}{2}$. 
We use \eqref{moments1} for 
$m_K$ Dirichlet $L$-functions and 
\eqref{moments2} for the remaining $n_K- m_K$ functions.
Recalling \eqref{product}, this  gives
\begin{align}
\notag
	\int_{T}^{2T}\ \Bigl\vert \zeta_{K}\Bigl(\frac{1}{2} + it\Bigr) \Big\vert\ dt 
& \le 
\Big(\int_{T}^{2T}\
	\Bigl\vert \zeta\Bigl(\frac{1}{2} + it\Bigr)
	\Bigr\vert^{4} dt\Big)^{1/4}
\prod_{j= 1}^{m_K -1}\Big(\int_{T}^{2T}\
	\Bigl\vert L\Bigl(\frac{1}{2} + it, \chi_j \Bigr)
	\Bigr\vert^{4} dt\Big)^{1/4}
	\\\notag
& \hskip2cm \times	\prod_{j= m_K }^{n_K - 1}\Big(\int_{T}^{2T}\
	\Bigl\vert L\Bigl(\frac{1}{2} + it, \chi_j \Bigr)
	\Bigr\vert^{12} dt\Big)^{1/12}
	\\ \label{first-case}
&  \ll_{K, \epsilon}  T^{m_K/4 + (n_K - m_K)/6+\epsilon} 
= T^{n_K/8 + 1/2 +\epsilon} . 
 \end{align}
 
Now let $4 \le n_K \le 12$ be odd and $m_K = 6-\frac{n_K + 1}{2} $. We use \eqref{moments3} for 
the Riemann zeta function, \eqref{moments1} for $m_K$
Dirichlet $L$-functions and  \eqref{moments2} 
for the remaining $n_K- (m_K + 1)$ functions.
Recalling \eqref{product} and applying H\"older's inequality, we have
\begin{align}
\notag
	\int_{T}^{2T}\ \Bigl\vert \zeta_{K}\Bigl(\frac{1}{2} + it\Bigr) \Big\vert\ dt 
 &\le    \Big( \int_{T}^{2T}\ \Bigl\vert \zeta\Bigl(\frac{1}{2} + it\Bigr) \Big\vert^{6} dt \Big)^{1/6}
\prod_{j= 1}^{m_K }\Big(\int_{T}^{2T}\
	\Bigl\vert L\Bigl(\frac{1}{2} + it, \chi_j \Bigr)
	\Bigr\vert^{4} dt\Big)^{1/4}
	\\ \notag
& \hskip2cm \times \prod_{j= m_K +1 }^{n_K - 1}\Big(\int_{T}^{2T}\
	\Bigl\vert L\Bigl(\frac{1}{2} + it, \chi_j \Bigr)
	\Bigr\vert^{12} dt\Big)^{1/12}
	\\
	\label{second-case}
 &\ll_{K, \epsilon}    T^{m_K/4+ (n_K - m_K)/6 + 1/24 +\epsilon} 
= T^{n_K/8 + 1/2 +\epsilon} .
\end{align}
Finally, when $n_K \ge 13$, we use \eqref{moments2} 
for twelve Dirichlet $L$-functions and 
the subconvexity bounds \eqref{subconv} for the remaining ones.
Recalling \eqref{product} and using H\"older's inequality we have
\begin{align}
\notag
	\int_{T}^{2T}\ \Bigl\vert \zeta_{K}\Bigl(\frac{1}{2} + it\Bigr) \Big\vert\ dt 
  &\ll_{K, \epsilon}  
 T^{(n_K - 12 - \delta)/6 + \epsilon}
 \prod_{j= n_K -12 }^{n_K - 1 }\Big(\int_{T}^{2T}\
	\Bigl\vert L\Bigl(\frac{1}{2} + it, \chi_j \Bigr)
	\Bigr\vert^{12} dt\Big)^{1/12}
	\\
	\label{third-case}
 &\ll_{K, \epsilon}  
 T^{(n_K - \delta)/6 + \epsilon}. 
 \end{align}
Collecting \eqref{first-case}-\eqref{third-case},
Lemma \ref{moment} follows.
\end{proof}

\begin{Rem}
By solving an integral linear programming problem,
it can be shown that the estimates 
in Lemma \ref{moment} are
optimal (given the
information in  \eqref{moments1}-\eqref{moments3} and the other available ones
about the integral moments of $\zeta(s)$ on the critical line).
For $4\le n_K\le 12$, the goal is to minimize the exponent of $T$ in the final estimate; 
such an exponent, using the H\"older inequality, the informations about the 
moments  (up to twelfth one) and the subconvexity estimates
mentioned before, can be written as
$f(A_j) = \frac{1}{6}\bigl(n_K-\sum_{j=2}^{12} A_j\bigr) +  \sum_{j=2}^{4}A_j/j 
+ \sum_{j=5}^{12} A_j (j+4)/(8j)  $, where $A_j$ denotes
the number of functions for which the $j$-th moment is used.
One also has to consider the conditions coming from the H\"older inequality,
which translates as $\sum_{j=2}^{12} A_j/j = 1$,
the fact that we are using a set of $n_K$ functions
($\sum_{j=2}^{12} A_j\le n_K$) and that some moments
are available for the Riemann zeta function only
($A_j \in \{0,1\}$  for every $j\in\{5,\dotsc, 11\}$).
For every $4\le n_K\le 12$ the Simplex Algorithm applied to this 
problem gave  the solutions described in the proof of Lemma  \ref{moment}.
\end{Rem}
   
\begin{proof}[Proof of Theorem \ref{idealcount}]
Let $\epsilon>0$. 
Using  Lemma \ref{ModLandau}, we obtain
$$
N_K(x) = \frac{1}{2\pi i}
\int_{1+\epsilon -iT}^{1+ \epsilon +iT}\zeta_{K}(s)\frac{x^s}{s}\, ds
+ O_{K, \epsilon}\Big(\frac{x^{1+ \epsilon}}{T}\Big),
$$
where $1 < T \le x$ is a parameter to be chosen later. On moving the 
line of integration to $\Re(s) = 1/2$ and using Cauchy's theorem we obtain
\begin{equation}
\label{Cauchy2}
N_{K}(x)
= 
\rho_{K} x 
+ \frac{1}{2\pi i}
\Big( \int_{1+\epsilon -iT}^{1/2 - iT} 
+ \int_{1/2 -iT}^{1/2 + iT}
+ \int_{1/2 + iT}^{1+\epsilon + iT}\Big)
\zeta_{K}(s)\frac{x^s}{s} \, ds
+ O_{K, \epsilon}\Big(\frac{x^{1+ \epsilon}}{T}\Big).
\end{equation}
By  Lemma \ref{Rademacher}, for $1/2 \le \sigma \le 1+ \epsilon$ and $|t| \ge 1$, we have
\begin{equation*}
%\label{Rade2}
 \zeta_{K}(\sigma + it)  \ll_{K,\epsilon} 
 |t|^{(n_K - \delta)(1-\sigma) / 3+ \epsilon}.
\end{equation*}

Now we can estimate the contribution on the horizontal
segments in \eqref{Cauchy2} as follows: 
\begin{align}\label{error3}
	\frac{1}{2\pi i	}
	\Big( \int_{1+\epsilon -iT}^{1/2 - iT} 
	+ \int_{1/2 + iT}^{1+\epsilon + iT}\Big)
	\zeta_{K}(s)\frac{x^s}{s}\, ds
	&\ll 
	\frac{1}{T}
	\int_{1/2}^{1+ \epsilon}
	\vert \zeta_{K}(\sigma + iT) \vert\, x^{\sigma}\, d\sigma\nonumber\\
	& \ll_{K, \epsilon} 
	\max_{1/2 \le \sigma \le 1+ \epsilon}
	x^{\sigma} T^{	(n_K - \delta)(1-\sigma) / 3 -1+\epsilon}
	\nonumber\\
	& \ll_{K, \epsilon} 
	x^{1/2+\epsilon}T^{-1}\bigl( x^{1/2}+ T^{(n_K-\delta)/6}\bigr).
\end{align} 
Next we estimate the contribution of the vertical segment to \eqref{Cauchy2}.
A dyadic dissection gives that
\begin{align*}
%\label{zetaK2}
	\frac{1}{2\pi i	}
	\int_{1/2 -iT}^{1/2 + iT}
	\zeta_{K}(s)\frac{x^s}{s}\, ds
	& \ll_K 
	x^{1/2} + x^{1/2}\int_{2}^T
	\frac{\vert \zeta_{K}(1/2 + it) \vert}{t}\, dt \nonumber\\
	& \ll_K 
	x^{1/2} + x^{1/2}\log T \Bigl(\max_{4\le T_1\le T}\frac{1}{T_1}\int_{T_1/2}^{T_1}\ 
	\Bigl\vert \zeta_{K}\Bigl(\frac{1}{2} + it\Bigr) \Big\vert\, dt\Bigr)
\end{align*}
and hence, using Lemma \ref{moment} we have 
\begin{equation}\label{error4}
	\frac{1}{2\pi i}
	\int_{1/2 -iT}^{1/2 + iT}
	\zeta_{K}(s)\frac{x^s}{s}\ ds
	 \ll_{K, \epsilon} 
   \begin{cases}
    x^{1/2+\epsilon}\,T^{n_K/8 - 1/2} &\ \textrm{for}\ \ 4 \le n_K \le 12,\\[1ex]
    x^{1/2+\epsilon}\,T^{(n_K-\delta)/6 - 1} &\ \textrm{for}\ \ n_K \ge 13.
    \end{cases} 
\end{equation}	  

Using \eqref{Cauchy2}, \eqref{error3} and \eqref{error4}, and recalling
\eqref{thetak}, we finally have
\begin{equation*}
%\label{final}
N_{K}(x)
= 
\rho_{K} x + 
O_{K, \epsilon}
\bigl(
x^{1/2 + \epsilon}T^{-1}
\bigl( x^{1/2} + T^{1/2\theta_K}
\bigr) 
\bigr). 
\end{equation*}
Theorem \ref{idealcount} follows on  choosing $T= x^{\theta_K}$. 
\end{proof}

\begin{Rem}
\label{RMT-remark}
The proof of Theorem \ref{idealcount} is based on suitable estimates for moments
of the Riemann zeta function and 
of the Dirichlet $L$-functions, and it is clear that more detailed information about 
their higher moments, or
about the moments of the Dedekind zeta function, would allow one to obtain sharper estimates 
of the error term for $n_K\ge 5$.

Based on random matrix theory heuristics, a weaker form of the conjecture in \cite{WH} implies that
\begin{equation}
\label{Heap-conjecture}
\int_{0}^{T}
\Bigl\vert \zeta_K\Bigl(\frac{1}{2} + it\Bigr) \Bigr\vert^{2m}\, dt
\ll_{K, \epsilon} T^{1+ \epsilon}
\end{equation}
for any positive real number $m$. 
Applying this for $m=1/2$ leads to the estimate $x^{1/2}T^{\epsilon}$ in \eqref{error4}
and hence the error term in Theorem \ref{idealcount} would be 
$\ll_{K,\epsilon} x^{1/2+\epsilon}$ for $n_K\ge 5$ too.
In \cite[Thm.~ 1.3]{MilinovichTB2014} it is shown that 
GRH implies (a slightly refined form of) \eqref{Heap-conjecture}
in the case in which $K$ is a finite solvable Galois extension of $\mathbb Q$.
The estimate in \eqref{Heap-conjecture} also follows from the Extended
Lindel\"of Hypothesis; Takeda exploited this in his paper \cite{Takeda},
see the proof of Theorem 3.2 there.
\end{Rem}

\begin{proof}[Proof of Theorem \ref{coprimeidealcount}]
Recall the definition of 
the M\"obius function over ideals:
\[
\mu(\mathfrak a) = 
\begin{cases}
1 & \textrm{if}\  \mathfrak a={\mathcal O}_K, \\
0 & \textrm{if}\ \mathfrak a \subseteq {\mathfrak p}^2 \ \textrm{for some prime ideal}\  {\mathfrak p}, \\
(-1)^r & \textrm{if}\ \mathfrak a ={\mathfrak p}_1 {\mathfrak p}_2 \dotsm {\mathfrak p}_r \ \textrm{for distinct prime ideals}\ 
{\mathfrak p}_1, {\mathfrak p}_2, \dotsc, {\mathfrak p}_r. \\
\end{cases}
\]
Using an inclusion-exclusion argument, we deduce that
$$
\sum_{\substack{1 \le N\mathfrak a,N\mathfrak b\le x
		\\ \mathfrak a+\mathfrak b=\mathcal O_{K}}} 1
 = 
\sum_{\substack{0 \neq {\mathfrak a}\subseteq \mathcal O_{K}}}\mu(\mathfrak a)N_{K}\Big(\frac{x}{N\mathfrak a}\Big)^2
 =
\sum_{1 \le  N\mathfrak{a} \le x }
\mu(\mathfrak a)\Bigl(\rho_{K}^2 \frac{x^2} {(N\mathfrak a)^2}
+ O_{K,\epsilon}\Big(\Big(\frac{x} {N\mathfrak a}\Big)^{2-\theta_K+ \epsilon}\Big)\Bigr),
$$
where $N_K(y)=0$ for $y<1$, so that
 the sum into the middle term is over non-zero integral ideals with norm at most $x$,
 and in the third sum we squared out the result in Theorem \ref{idealcount}.
 Since the sum in the error term is bounded by a convergent series, we can simplify this and obtain
\begin{equation}\label{eqn1}
\sum_{\substack{1 \le N\mathfrak a,N\mathfrak b\le x
		\\ \mathfrak a+\mathfrak b=\mathcal O_{K}}} 1
 =
 \rho_{K}^2 x^2
\sum_{1 \le  N\mathfrak{a} \le x }
\frac{\mu(\mathfrak a)} {(N\mathfrak a)^2}
+ O_{K, \epsilon}\big(x^{2-\theta_K + \epsilon}\big).
\end{equation}
Recalling that $\zeta_{K}(s)^{-1} = \sum_{\mathfrak{a}} \mu(\mathfrak a)(N{\mathfrak{a}})^{-s}$ for 
$\Re(s)>1$, we also get
$$
 \sum_{1 \le  N\mathfrak{a} \le x }
 \frac{\mu(\mathfrak a)}{(N\mathfrak a)^2}
 =
 \frac{1}{\zeta_{K}(2)}
-
\sum_{N\mathfrak{a} > x }
\frac{\mu(\mathfrak a) }{(N\mathfrak a)^2}.
$$
On noting that $N_K(m) - N_K(m-1) \le d(m)^{n_K} \ll_{n_K,\epsilon} m^{\epsilon}$, we see that
$$
\sum_{N\mathfrak a >x} \frac{1}{(N\mathfrak a)^2} 
\ll \int_x^{\infty}\frac{y^{\epsilon}}{y^2}\ dy 
\ll_{n_K,\epsilon} x^{-1 + \epsilon},
$$
and hence
$$
 \sum_{1 \le  N\mathfrak{a} \le x }
 \frac{\mu(\mathfrak a)}{(N\mathfrak a)^2}
 =
 \frac{1}{\zeta_{K}(2)}
 + 
O_{n_K, \epsilon}(x^{-1 + \epsilon}).
$$
Inserting this estimate into \eqref{eqn1} we have
$$
\sum_{\substack{1 \le N\mathfrak a,N\mathfrak b\le x
		\\ \mathfrak a+\mathfrak b=\mathcal O_{K}}} 1
 =
 \frac{\rho_K^2 x^2}{\zeta_{K}(2)}
+
O_{K, \epsilon}(x^{2-\theta_K + \epsilon}).
$$
The proof is now concluded on
comparing this with the estimate 
for $N_K(x)$ given in Theorem \ref{idealcount}.
\end{proof}

\begin{Rem}
For an integer $m\ge 3$,
a variation of the above proof, in combination with the identity
$$
\sum_{\substack{1 \le N\mathfrak{a}_1, \ldots, N\mathfrak{a}_m\le x
	\\ \mathfrak{a}_1+ \dotsm+ \mathfrak{a}_m=\mathcal O_{K}}} 1
 = 
\sum_{\substack{0 \neq {\mathfrak a}\subseteq \mathcal O_{K}}}\mu(\mathfrak a)N_{K}\Big(\frac{x}{N\mathfrak a}\Big)^m,
$$
yields, with $\theta_K$ as in \eqref{thetak},
\begin{equation*}
\sum_{\substack{1 \le N\mathfrak{a}_1, \ldots, N\mathfrak{a}_m\le x
	\\ \mathfrak{a}_1+ \dotsm + \mathfrak{a}_m=\mathcal O_{K}}} 1
 = 
\frac{N_{K}(x)^m }{\zeta_{K}(m)}
	+
	O_{K, m, \epsilon}(x^{m-\theta_K+ \epsilon}),
\end{equation*}
which improves on results of Sittinger--DeMoss \cite{SittingerDeMoss}, Takeda \cite{WTpreprint} and Takeda--Koyama
\cite{TK}. 
\end{Rem}

\medskip
\noindent \textbf{Acknowledgment}. 
This article 
arose as a side problem in our attempts in
answering a question of Hendrik 
Lenstra and Elie Studnia, see \cite{FLLM}.
We are grateful that they contacted the authors regarding this problem. We thank
Valentin Blomer, Olivier Bordell\`es, Steve Fan, Sanoli Gun,
 Winston Heap, Ikuya Kaneko and Djordje Milicevic 
 for helpful feedback. We also thank Anu Hirvonen (MPIM-librarian) for enthusiastic and efficient 
assistance.
 
The paper was written during a postdoctoral visit of the second author to the
Max-Planck-Institut f\"ur Mathematik. She is grateful to the third author for his 
mentorship and guidance. She is also grateful to MPIM for this opportunity.

\vskip1cm
\begin{flushleft} 
Alessandro Languasco, Universit\`a di Padova,\\
Dipartimento di Ingegneria Informatica - DEI, \\
via Gradenigo 6/b, 35131 Padova, Italy.\\
e-mail: alessandro.languasco@unipd.it
\end{flushleft}

\medskip
\begin{flushleft}
Rashi Lunia, Max Planck Institute for Mathematics\\ 
Vivatsgasse 7, 53111, Bonn, 
Germany.\\  
e-mail: lunia@mpim-bonn.mpg.de 
\end{flushleft}

\medskip
\begin{flushleft}
Pieter Moree, Max Planck Institute for Mathematics\\ 
Vivatsgasse 7, 53111, Bonn, 
Germany.\\  
e-mail: moree@mpim-bonn.mpg.de 
\end{flushleft}
\end{document}